\def\Ddots{\mathinner{\mkern1mu\raise\p@
\vbox{\kern7\p@\hbox{.}}\mkern2mu
\raise4\p@\hbox{.}\mkern2mu\raise7\p@\hbox{.}\mkern1mu}}
\def\Xint#1{\mathchoice
{\XXint\displaystyle\textstyle{#1}}%
{\XXint\textstyle\scriptstyle{#1}}%
{\XXint\scriptstyle\scriptscriptstyle{#1}}%
{\XXint\scriptscriptstyle\scriptscriptstyle{#1}}%
\!\int}
\def\XXint#1#2#3{{\setbox0=\hbox{$#1{#2#3}{\int}$}
\vcenter{\hbox{$#2#3$}}\kern-.5\wd0}}
\def\dashint{\Xint-}
\newtheorem{theorem}{Theorem}[section]
\theoremstyle{definition}
\newtheorem{remark}[theorem]{Remark}
\def\R{\mathbb R}
\def\bey{\begin{eqnarray*}}
\def\eey{\end{eqnarray*}}
\def\rb{\mathcal R}
\newcommand{\eps}{\varepsilon}
\def\({\left(}
\def\){\right)}
\def\[{\left[}
\def\]{\right]}
\def\<{\langle}
\def\>{\rangle}
\newcommand{\bmo}{{\rm BMO}}
\newcommand{\cmo}{{\rm CMO}}
\begin{document}

\subjclass[2010]{Primary: 42B20; Secondary: 47B07, 42B35,
47G99}

\keywords{Calder\'on--Zygmund theory, singular integrals,
commutators, bilinear operators, compact operators, bounded
mean oscillation, CMO}

\title[Characterization of compactness of bilinear commutators]{Characterization
of compactness of commutators of \\
bilinear singular integral
operators}

\date{\today}

\author[L. Chaffee]{Lucas Chaffee}
\address{%
Department of Mathematics \\
Western Washington University\\
516 High Street\\
Bellingham, WA 98225, USA} \email{Lucas.Chaffee@wwu.edu}

\author[P. Chen]{Peng Chen}
\address{ Department of Mathematics, Sun Yat-sen (Zhongshan)
University, Guang- zhou, 510275, P.R. China}
\email{chenpeng3@mail.sysu.edu.cn}

\author[Y. Han]{Yanchang Han}
\address{School of Mathematical Sciences, South China Normal University,
Guang- zhou, 510631, P.R. China}
\email{hanych@scnu.edu.cn}

\author[R.H. Torres]{Rodolfo H. Torres}
\address{%
Department of Mathematics\\
University of Kansas\\
Lawrence, KS 66045, USA}
\email{torres@ku.edu}

\author[L.A. Ward]{Lesley A. Ward}
\address{School of Information Technology and Mathematical Sciences,
         University of South Australia,
         Mawson Lakes  SA  5095,
         Australia
         }
\email{lesley.ward@unisa.edu.au}

\thanks{Peng Chen was supported by NNSF of China 11501583,
Guangdong Natural Science Foundation 2016A030313351, the
Fundamental Research Funds for the Central Universities
161gpy45 and by the Australian Research Council, Grant
No.~ARC-DP160100153. Yanchang Han was supported by Guangdong
Province Natural Science Foundation  Grant No.~2017A030313028
and Lesley A. Ward was supported by the Australian Research
Council, Grant No.~ARC-DP160100153.}

\begin{abstract}
    The commutators of  bilinear Calder\'on-Zygmund operators and
    point-wise multiplication with a symbol in $\cmo$ are bilinear
    compact operators on product of Lebesgue spaces. This  work
    shows that, for certain non-degenerate Calder\'on-Zygmund
    operators, the symbol being in $\cmo$ is not only sufficient
    but actually necessary for the compactness of the commutators.
\end{abstract}

\maketitle

\section{Introduction}

In this note we resolve a problem that has been open for a
while in  the multilinear Calder\'on--Zygmund theory. Namely,
whether the compactness of the  commutators of the bilinear
Riesz transforms (see the next section for technical
definitions) with point-wise multiplication can be used to
characterize the space $\cmo(\R^n)$. For the purpose of this
article,  $\cmo(\R^n)$ is the closure in the John--Nirenberg
$\bmo(\R^n)$, with its usual topology, of the space of
infinitely differentiable functions with compact support. This
problem has been motivated by the analogous situation in the
classical (linear) Calder\'on--Zygmund theory and several
preliminary existing results in the multilinear setting, which
we summarize in what follows.

As is well-known, the first to study the  commutator
$$
[b,\mathcal R^k](f):=\mathcal R^k (bf)-b\mathcal R^k(f)
$$
of the classical Riesz transforms $\mathcal R^k$ with
point-wise multiplication by a function $b$ were Coifman,
Rochberg and Weiss \cite{CRW}. They showed that $[b,\mathcal
R^k]$  is bounded on $L^p$ for some $p$ with $1< p<\infty$ if and only
if the symbol $b$ is in $\bmo(\mathbb{R}^n)$. Their result was then extended
to other non-degenerate Calder\'on--Zygmund operators by Janson
\cite{J} and Uchiyama \cite{U}. Moreover, Uchiyama showed that
$[b,\mathcal R^k]$  is compact on $L^p$ for some (then for all) $1< p<\infty$ if and
only if  the function $b$ is not just in $\bmo(\mathbb{R}^n)$ but actually in
$\cmo(\mathbb{R}^n)$.

In the multilinear setting, an interesting situation arises:
multilinear Calder\'on--Zygmund operators, their commutators,
and other related operators tend to be bounded also into $L^p$
spaces outside the Banach space situation. For example, in the
bilinear case a Calder\'on--Zygmund operator~$T$ in the sense of
Grafakos and Torres \cite{GT} (see also the references therein)
satisfies
 $$ T:L^{p_1} \times L^{p_2} \to  L^p,$$
for all $1<p_1<\infty$, $1<p_2<\infty$ and $1/p_1+1/p_2=1/p<2$.
This creates complications when studying the case of $p<1$ in
the target space, as some analytic tools (often depending on
duality) fail in this situation.  For this reason the case
$p>1$ and $p<1$ have been occasionally treated separately in
the literatures and by different arguments. For example, the
boundedness of the commutators
\begin{align*}
[b, T]_1(f,g):=\, & T(bf,g)-b T(f,g),\\
[b,T]_2(f,g):=\, & T(f,bg)-bT(f,g),
\end{align*}
of a bilinear Calder\'on--Zygmund
operator $T$ with a $\bmo$ function~$b$
was first obtained by P\'erez and Torres in \cite{PT} when
$p>1$, while the case of $p\leq 1$ was latter studied
independently by Tang \cite{tang} and Lerner et
al.~\cite{LOPTT}.  The compactness of the same commutators when
$b\in \cmo(\mathbb{R}^n)$ was obtained by B\'enyi and Torres in \cite{BT} but
only for  $p\geq 1$. Nonetheless, it was recently observed by
Torres and Xue \cite{TX} that the result also holds for
$1/2<p<1$.  The partial converse fact that the boundedness of
$[b, T]_1$ or $[b, T]_2$ for certain bilinear
Calder\'on--Zygmund operators forces $b$ to be in $\bmo(\mathbb{R}^n)$ was
first proved by Chaffee \cite{C} and was then also revisited by
Li and Wick \cite{LW} using different techniques. In both cases
the results are also under the assumption  $p>1$. Finally, in a very recent
manuscript posted in arXiv by Wang, Zhou and Teng \cite{WZT},
the result of Chaffee \cite{C} was extended to $1/2<p\leq 1$.

We will show in Theorem \ref{main} below that at least for the
bilinear Riesz transforms, the compactness of the commutators
forces the symbol $b$ to be in $\cmo(\mathbb{R}^n)$. Our work follows ideas
of Uchiyama \cite{U} and Chen, Ding and Wang~\cite{CDW} in the
linear case, as well as modification done in \cite{CT} for the
bilinear operators. We note however that the main difference
with respect to the work in \cite{CT}, and a difficulty we
overcome here, is that the operators in \cite{CT} are bilinear
fractional integral operators which are hence positively
defined, which is a property heavily used in \cite{CT} but
certainly completely failing for Calder\'on--Zygmund operators.
We refer the reader to \cite{CT} and the references therein for
more on commutators of fractional singular operators in both
linear and multilinear settings.

\bigskip

{\bf Acknowledgement}. Part of the work leading to this article
took place while the last two named authors were visiting the
Mathematical Sciences Research Institute (MSRI) at Berkeley in
February 2017, during the Harmonic Analysis program. This stay
at MSRI gave them a chance to combine different previous
efforts by all the colleagues involved. The authors would like
to thank the Institute and the organizers of the program for
providing the resources for such a fruitful opportunity to
carry out this research.

\section{Definitions}

As mentioned in the introduction, the space $\cmo(\R^n)$ is the
closure in the $\bmo(\R^n)$ topology of the space of infinitely
differentiable functions with compact support, denoted here by
$C^\infty_c(\R^n)$. For brevity, throughout the paper we denote
$L^p(\mathbb{R}^n)$ by $L^p$, and similarly for $\bmo$, $\cmo$
and $C_c^\infty$. Also, for convenience, we will use the $\bmo$
norm (modulo constants) defined for a locally integrable
function $b$ by
$$\|b\|_{\bmo}:=\sup_{Q}\dashint_Q |b(x)-b_Q|\, dx < \infty,$$
with the  supremum taken over all cubes $Q\in \R^n$  with edges
parallel to the coordinate axes, and where  for any locally
integrable function $f$ we use the standard notation $f_Q
=\dashint_Q f:=\frac{1}{|Q|}\int_Qf(x)\,dx$ for the average of
$f$ over $Q$. In addition, we recall (see \cite{U}) that $b \in
\bmo$ is in $\cmo$ if and only if
\begin{align}
  & \displaystyle\lim_{a\to0}\sup_{|Q|=a}\frac{1}{|Q|}\int_Q|b(x)-b_Q|\,dx=0, \label{1cmo}\\
   & \displaystyle\lim_{a\to\infty}\sup_{|Q|=a}\frac{1}{|Q|}\int_Q|b(x)-b_Q|\,dx=0, \quad\quad{\text{and}}\label{2cmo}\\
    & \displaystyle\lim_{|y|\to\infty}\frac{1}{|Q|}\int_Q|b(x+y)-b_Q|\,dx=0, \mbox{ for each } Q.\label{3cmo}
   \end{align}

For $x\in \R^n$ we will use the notation $x=(x^1,\dots ,x^n)$ and consider the $2n$ {\it bilinear Riesz transform operators} defined for $k=1,\dots,n$ by
\begin{align*}
\mathcal R^k_1(f,g)(x):=\, & \text{p.v.}\iint_{\R^{2n}}\frac{x^k-y^k}{(|x-y|^2+|x-z|^2)^{n+1/2}}\,f(y)g(z)\,dydz,\\
\mathcal R^k_2(f,g)(x):=\, & \text{p.v.}\iint_{\R^{2n}}\frac{x^k-z^k}{(|x-y|^2+|x-z|^2)^{n+1/2}}\,f(y)g(z)\,dydz.
\end{align*}
The name of these operators is justified by the fact that they can be ``obtained"  by considering the linear Riesz transforms in $\R^{2n}$ defined by
\begin{align*}
\mathcal R^k(F)(u):=\, & \text{p.v.}\int_{\R^{2n}}\frac{u^k-v^k}{|u-v|^{2n+1}}\,F(v)\,dv,
\end{align*}
where  $u=(u^1, \dots, u^{2n})$  and  $v=(v^1, \dots, v^{2n})$,  $k=1,\dots,2n$.
Note that setting $u=(x,x)$, $v=(y,z)$ with $x,y,z \in \R^n$,
and  $F(y,z)=f(y)g(z)$ leads, formally,  to the bilinear operators $\mathcal R^k_j$, $j=1,2$. For $k=1,\dots, n$,
$\mathcal R^k_1(f,g)(x)= \mathcal R^k(fg)(x,x)$, while $\mathcal R^k_2(f,g)(x)= \mathcal R^{k+n}(fg)(x,x)$.

The boundedness of the $\mathcal R^k_j$ operators from $L^{p_1}
\times L^{p_2}$ to $L^p$, for $1<p_1<\infty$, $1<p_2<\infty$
and $1/p_1+1/p_2=1/p<2$, is by now well-known. See for example
\cite{GT} and the references therein.

For $j=1,2$, and $k=1,\dots,n$,  the (first--order) commutators of the Riesz transform operators with a  symbol $b$  are given by
\begin{align}
\begin{split}\label{comm}
[b, \mathcal R^k_j]_1(f,g):=\, & \mathcal R^k_j(bf,g)-b \mathcal R^k_j(f,g),\\
[b,\mathcal R^k_j]_2(f,g):=\, & \mathcal R^k_j(f,bg)-b\mathcal R^k_j(f,g).
\end{split}
\end{align}
 Notice that $b \in \bmo$ is consistent with the fact that, by linearity, for any complex number $C$,
\begin{align*}
\begin{split}
[b-C, \mathcal R^k_j]_1(f,g)= [b, \mathcal R^k_j]_1(f,g),\\
[b-C,\mathcal R^k_j]_2(f,g)= [b, \mathcal R^k_j]_2(f,g),
\end{split}
\end{align*}
a fact that we will later use.

By the results mentioned in the introduction the boundedness of
any of these commutators from $L^{p_1} \times L^{p_2}$ to $
L^p$, for the full range of exponents $1<p_1<\infty$,
$1<p_2<\infty$ and $1/p_1+1/p_2=1/p<2$ is equivalent to $b$
being in $\bmo$. It is also known that they are compact for the
same range of exponents if in addition $b\in \cmo$. The new
result we shall present is the converse of this last statement.

\section{Characterization of compactness}

\begin{theorem}\label{main} Let $1<p_1<\infty$,
$1<p_2<\infty$ and
$\frac1{p}=\frac1{p_1}+\frac1{p_2}<2$.\footnote{We note that in
a first draft of this article we had stated Theorem \ref{main}
only for $p>1$. Although the computations in the proof (the
same presented here) work for all $1/2<p<\infty$, it was not
known at the time whether  the boundedness of the commutators
when $1/2<p\leq 1$  implies $b \in \bmo$, which is a condition
needed to jump start our arguments in the proof. Nothing else
in the proof depends on the value of $p>1/2$.  The recent result
in \cite{WZT} allows us now to state Theorem \ref{main} for the
full range of exponents without altering its proof.}
 Then each of the commutators in \eqref{comm} is a compact bilinear operator from
$L^{p_1} \times L^{p_2} \to  L^p$, if and only if $b\in \cmo$.
\end{theorem}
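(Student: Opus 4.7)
The sufficiency $b\in\cmo\Rightarrow$ compactness is already available from \cite{BT,TX}, so the plan is to prove the necessity. Since compactness trivially implies boundedness, and by \cite{C,WZT} boundedness of any of the commutators \eqref{comm} in the full range $1/2<p<\infty$ forces $b\in\bmo$, the task reduces to showing that if $b\in\bmo\setminus\cmo$ then at least one of the commutators $[b,\mathcal R^k_j]_i$ fails to be compact.

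Assume $b\in\bmo\setminus\cmo$. Then one of the three characterizations \eqref{1cmo}, \eqref{2cmo}, \eqref{3cmo} must fail, giving $\delta>0$ and a sequence of cubes $\{Q_\ell\}$ with $\frac{1}{|Q_\ell|}\int_{Q_\ell}|b-b_{Q_\ell}|\ge\delta$ and satisfying, respectively, $|Q_\ell|\to 0$, $|Q_\ell|\to\infty$, or $Q_\ell=Q+y_\ell$ with $|y_\ell|\to\infty$. Following Uchiyama \cite{U} and its bilinear version in \cite{CT}, I would apply a standard selection lemma to produce, inside each $Q_\ell$, disjoint subsets $E_\ell^{+}$ and $E_\ell^{-}$ of measures comparable to $|Q_\ell|$ on which $b-b_{Q_\ell}$ has opposite signs and $|b-b_{Q_\ell}|\gtrsim\delta$. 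I would then take $f_\ell$ to be a normalized (in $L^{p_1}$) function built from $\operatorname{sign}(b-b_{Q_\ell})\chi_{Q_\ell}$, and $g_\ell$ to be a normalized (in $L^{p_2}$) smooth bump supported in a carefully positioned translate $Q'_\ell$ of $Q_\ell$.

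The central estimate is to identify a ``target'' cube $F_\ell$, a translate of $Q_\ell$ at controlled distance and direction, on which simultaneously (i) for every $x\in F_\ell$, $y\in Q_\ell$, $z\in Q'_\ell$ the factor $x^k-y^k$ in the kernel has a fixed sign, (ii) the denominator $(|x-y|^2+|x-z|^2)^{n+1/2}$ is comparable to a fixed power of $|Q_\ell|^{1/n}$, and (iii) using the freedom to replace $b$ by $b-b_{Q_\ell}$, the sign of $b(y)-b(x)$ aligns with the kernel on the portion of $Q_\ell$ chosen by $f_\ell$. These together should yield the pointwise lower bound
\begin{equation*}
\bigl|[b,\mathcal R^k_1]_1(f_\ell,g_\ell)(x)\bigr|\ge c>0\quad\text{for all }x\in F_\ell,
\end{equation*}
whence $\|[b,\mathcal R^k_1]_1(f_\ell,g_\ell)\|_{L^p}\ge c\,|F_\ell|^{1/p}$, which after normalizing $f_\ell,g_\ell$ stays bounded below by a positive constant independent of $\ell$. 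Passing to a subsequence in which the $F_\ell$ are either mutually far apart (case (c)) or have rapidly contracting/expanding scales (cases (a), (b)), one ensures the images $[b,\mathcal R^k_1]_1(f_\ell,g_\ell)$ are pairwise $L^p$-separated, contradicting compactness.

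The hard part is producing the pointwise lower bound when the Calder\'on--Zygmund kernel is \emph{oscillatory} rather than positive: in \cite{CT} the positivity of the fractional integral kernel gave the lower bound essentially for free, but here the Riesz kernel changes sign. The remedy is to engineer the geometric configuration of $Q_\ell$, $Q'_\ell$ and $F_\ell$ so that every sign-changing factor in the integrand is forced into a definite sign simultaneously, and this has to be carried out uniformly across the three failure scenarios for $\cmo$. A secondary technical point is to control the contribution of $b-b_{Q_\ell}$ restricted to the complement of $Q_\ell$, where one has only logarithmic $\bmo$ growth; this should follow from a standard John--Nirenberg estimate combined with the decay of the kernel away from the diagonal, ensuring that the ``tail'' term cannot overwhelm the main term on $F_\ell$.
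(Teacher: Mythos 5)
Your blueprint matches the paper's at the structural level: reduce to necessity; use failure of one of \eqref{1cmo}--\eqref{3cmo} to get a sequence of cubes $\{Q_\ell\}$ on which the mean oscillation stays $\geq\delta$; build test functions aligned with $\operatorname{sgn}(b-b_{Q_\ell})$; get a uniform lower $L^p$ bound on a target cube placed in an annulus where $x^k-y^k$ has a fixed sign; and produce $L^p$-separated images to contradict compactness. You also correctly flag the main obstacle --- the oscillation of the Riesz kernel --- and the remedy (a geometric placement that forces the kernel into a definite sign), which is indeed what replaces the positivity used for fractional integrals in \cite{CT}.

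There is, however, a genuine gap in your step (iii). You write that ``using the freedom to replace $b$ by $b-b_{Q_\ell}$, the sign of $b(y)-b(x)$ aligns with the kernel on the portion of $Q_\ell$ chosen by $f_\ell$.'' Subtracting a constant from $b$ does not change $b(y)-b(x)$, and no choice of $f_\ell$ supported in $Q_\ell$ can make $b(y)-b(x)$ have a definite sign simultaneously for all $y\in Q_\ell$ and all $x$ in the target cube: the values $b(x)$ there are completely uncontrolled for a $\bmo$ function. What the argument actually requires is the splitting
\[
b(y)-b(x)=\bigl(b(y)-b_{Q_\ell}\bigr)-\bigl(b(x)-b_{Q_\ell}\bigr),
\]
which breaks the commutator into two pieces that must be handled by \emph{different} mechanisms. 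The term $\mathcal R\bigl((b-b_{Q_\ell})f_\ell,g_\ell\bigr)(x)$ is the one whose integrand you can make single-signed, because $f_\ell(b-b_{Q_\ell})\geq 0$ by construction; this gives the pointwise lower bound on the target cube. The other term, $(b(x)-b_{Q_\ell})\,\mathcal R(f_\ell,g_\ell)(x)$, carries no usable sign and cannot be bounded below; it has to be estimated \emph{above} in $L^p$ and shown to be small outside a large dilate of $Q_\ell$, using the decay of $\mathcal R(f_\ell,g_\ell)$ away from $Q_\ell$ together with the John--Nirenberg inequality for $b-b_{Q_\ell}$. In your scheme this term never gets isolated, so the ``tail'' issue you mention at the end would actually sit inside the very integrand you are trying to bound below. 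Relatedly, it helps to normalize $f_\ell$ to have mean zero (subtract the average of $\operatorname{sgn}(b-b_{Q_\ell})$ over $Q_\ell$), which preserves the sign property $f_\ell(b-b_{Q_\ell})\geq 0$ and yields the extra kernel decay $|x-y_\ell|^{-2n-1}$ that makes the tail estimate converge robustly for all $p>1/2$.

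Two smaller corrections. The pointwise lower bound on the target cube cannot be a uniform constant $c>0$: with $f_\ell,g_\ell$ $L^{p_i}$-normalized and the target cube of side $\approx d_\ell$ at distance $\approx d_\ell$ from $Q_\ell$, the correct pointwise scale is $\approx |Q_\ell|^{-1/p}$, which after integration yields the $\ell$-independent lower $L^p$ bound. And supporting $g_\ell$ in a separate translate $Q'_\ell$ is unnecessary: taking $g_\ell=|Q_\ell|^{-1/p_2}\chi_{Q_\ell}$ on the \emph{same} cube already gives $|x-z|\approx|x-y_\ell|$ for $x$ in the annulus, which is all the denominator comparison needs, and avoids extra geometric bookkeeping.
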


\begin{proof}
We only need to establish the necessity of $b\in \cmo$ since another direction was proved in \cite{BT} and \cite{TX} as noted in Introduction.
Moreover, by symmetry and a change of variables it is enough to
consider, for example,  $\mathcal R^1_1$ and $ [b, \mathcal
R_1^1]_1$. To simplify notation we denote $\mathcal
R^1_1$  by $\mathcal R$.

Fix exponents $p_1,p_2, p$ as in the statement of the theorem.
Since bilinear compact operators are bounded, if we assume
$\rb$ to be compact from $L^{p_1} \times L^{p_2} \to  L^p$ we
must have that $b \in \bmo$; see \cite{C} for $p>1$ and
\cite{WZT} for $1/2<p\leq 1$. So for convenience,  by
linearity, we may  assume that $b$ is real valued and with
$\|b\|_{\bmo}=1.$

We will follow very closely some arguments in \cite{U, CDW} and \cite{CT} to show that if $b$ fails to satisfy
one of the conditions \eqref{1cmo}--\eqref{3cmo}, then one
arrives at a contradiction with the compactness of the operator.
So $b$ must be in $\cmo$.  We notice, however,  that a main
difference in the arguments below, in particular with respect
to \cite{CDW} and \cite{CT}, is the fact  alluded to in the
introduction that the fractional integral operators considered
in those works are actually positive operators, while the
singular integrals studied here are not.  This requires a
modification in the lower estimate \eqref{Est2} proved below.

Assume that  $\{Q_j\}_j$ is a sequence of cubes such that
  \begin{equation}
  \frac{1}{|Q_j|}\int_{Q_j}|b(x)-b_{Q_j}|\,dx\geq\eps,\label{awayfromzero}
  \end{equation}
for some $\eps>0$ and all $j\in \mathbb{N}$. As in \cite{CDW}
and \cite{CT}, define two sequences of functions $\{f_j\}$ and
$\{g_j\}$ associated with the cubes $Q_j$ in the following way.
Let
    $$c_0:=|Q_j|^{-1}\int_{Q_j}\text{sgn}(b(y)-b_{Q_j})\,dy$$
and define
         $$f_j(y):=|Q_j|^{-\frac{1}{p_1}}\left({\rm sgn}(b(y)-b_{Q_j})-c_0\right)\chi_{Q_j}(y).$$
Here $\text{sgn}$ denotes the usual signum function. Define
also
$$g_j(y):=|Q_j|^{-\frac{1}{p_2}}\chi_{Q_j}(y).$$ These functions
satisfy the following properties
\begin{enumerate}[(a)]
  \item $\text{supp}\,f_j\subset Q_j$ and $\text{supp}\,g_j\subset Q_j,$
    \item $f_j(y)(b(y)-b_{Q_j})\geq0,$
    \item $\int f_j(y)\,dy=0,$
    \item$\int
        (b(y)-b_{Q_j})f_j(y)\,dy=|Q_j|^{-\frac{1}{p_1}}\int_{Q_j}|b(y)-b_{Q_j}|\,dy,$
    \item$|f_j(y)|\leq 2|Q_j|^{-\frac{1}{p_1}}$ and $|g_j(y)|\leq |Q_j|^{-\frac{1}{p_2}},$
\item$\|f_j\|_{L^{p_1}}\leq2$,
 \item$\|g_j\|_{L^{p_2}}=1.$
\end{enumerate}

  Let $\{y_j\}$ be the collection of centers of  the cubes  $\{Q_j\}$. Then for  all  $x\in\(2\sqrt n Q_j\)^{c}$ the following standard pointwise estimates hold:
    \begin{align}
    |\rb((b-b_{Q_j})f_j,g_j)(x)|&\lesssim |Q_j|^{\frac{1}{p'_1}+\frac{1}{p'_2}}|x-y_j|^{-2n},  \label{Est1}\\
    |\rb(f_j,g_j)(x)|&\lesssim |Q_j|^{\frac{1}{p'_1}+\frac{1}{p'_2}+\frac1n}|x-y_j|^{-2n-1},\label{Est3}
    \end{align}
where the constants involved are independent of $j, b, f_j,
g_j$ and $\eps$. Indeed, for all such $x$ and all $y\in Q_j$ we
have $|x-y| \approx |x-y_j| >0$, and hence by~(a) and (e),
\begin{align*}
    |\rb((b-b_{Q_j})&f_j,g_j)(x)|=\left|\ \iint_{\R^{2n}} \frac{ (x^1-y^1)(b(y)-b_{Q_j})f_j(y)g_j(z)}{\(|x-y|^2+|x-z|^2\)^{n+1/2}}\,dydz\right|\\
       &\lesssim \frac{1}{|Q_j|^{\frac{1}{p_1}+\frac{1}{p_2}}}|x-y_j|^{-2n}\int_{Q_j}\int_{Q_j}|b(y)-b_{Q_j}|\,dydz\\
    &\lesssim |Q_j|^{\frac{1}{p'_1}+\frac{1}{p'_2}}|x-y_j|^{-2n}\|b\|_{\bmo}\\
    &\lesssim |Q_j|^{\frac{1}{p'_1}+\frac{1}{p'_2}}|x-y_j|^{-2n}.
\end{align*}
On the other hand, using (a), (e), the cancellation property (c) of $f_j$ and the regularity of the kernel of the operator $\rb$,
\begin{align*}
    |&\rb(f_j,g_j)(x)|=\left|\iint_{\R^{2n}}\frac{ (x^1-y^1)f_j(y)g_j(z)}{\(|x-y|^2+|x-z|^2\)^{n+1/2}}\,dydz\right|\\
    &=\left| \int_{\R^n}\!\! \left( \int_{\R^n} \left( \frac{ (x^1-y^1)f_j(y)g_j(z)}{\(|x-y|^2+|x-z|^2\)^{n+1/2}} \right. \right. \right.\\
    &\lesssim  \hspace{4cm}  - \left. \left. \left.  \frac{(x^1-y_j^1)f_j(y)g_j(z)}{\(|x-y_j|^2+|x-z|^2\)^{n+1/2}} \right )\,dy \right)\, dz\right|\\
    &\lesssim \int_{Q_j}\int_{Q_j} \frac{|y-y_j||f_j(y)|g_j(z)}{\(|x-y_j|^2+|x-z|^2\)^{n+1}}\,dy dz\\
    &\lesssim \frac{|Q_j|^{\frac1n}}{|x-y_j|^{2n+1}}\int_{Q_j} \int_{Q_j} |f_j(y)|g_j(z)\,dydz\\
    &\lesssim |Q_j|^{\frac1{p'_1}+\frac{1}{p'_2}+\frac1n}|x-y_j|^{-2n-1}.
    \end{align*}

Next, we note that if $d_j$ is the side-length of $Q_j$ then
for all  positive numbers $\widetilde \gamma_1, \widetilde
\gamma_2$, with $\widetilde \gamma_2 = 8 \widetilde \gamma_1
\gg 1 $ there always exists a cube $\widetilde Q_j$ of side-length
$\frac{\widetilde \gamma_2}{4\sqrt n} d_j$    contained in  the
annulus
$$A=\{x\in \R^n: \widetilde \gamma_1  d_j < |x-y_j| < \widetilde \gamma_2 d_j\},$$
and such that $|x-y|\approx |x-y_j| \approx x^1- y_j^1 \approx
x^1-y^1>0$ for all $x\in \widetilde Q_j$ and all $y\in Q_j$. We
claim that for all such $x$,
\begin{equation}
    |\rb((b-b_{Q_j})f_j,g_j)(x)| \gtrsim \eps|Q_j|^{\frac{1}{p'_1}+\frac{1}{p'_2}}|x-y_j|^{-2n}\label{Est2},
\end{equation}
where again the constant involved is independent of $j, b, f_j,
g_j$ and $\eps$. To see~\eqref{Est2},  we use properties (b)
and (d) of $f_j$ to estimate
 \begin{align*}
    |\rb((b-b_{Q_j})&f_j,g_j)(x)|=\left|\iint_{\R^{2n}} \frac{(x^1-y^1)(b(y)-b_{Q_j})f_j(y)g_j(z)}{\(|x-y|^2+|x-z|^2\)^{n+1/2}}\,dydz\right|\\
    &\gtrsim |Q_j|^{1-\frac{1}{p_2}}|x-y_j|^{-2n} \int_{Q_j}(b(y)-b_{Q_j})f_j(y)\,dy\\
    &=  C_1 |Q_j|^{1-\frac{1}{p_2}}|x-y_j|^{-2n}|Q_j|^{1-\frac{1}{p_1}}\frac1{|Q_j|}\int_{Q_j}|(b(y)-b_{Q_j})|\,dy\\
    &\geq C_1 |Q_j|^{\frac{1}{p'_1}+\frac{1}{p'_2}}|x-y_j|^{-2n}\eps.
    \end{align*}

We continue to follow the computations in  \cite{U}, \cite{CDW}  and \cite{CT} and want to establish now that
there exist constants $\gamma_1,\gamma_2$ with $\gamma_2>\gamma_1>0$ and $\gamma_3>0$, depending only on $p_1,\ p_2,\ n$ and $\eps$, such that the following estimates hold:
    \begin{align}
    \(\int_{\gamma_1d_j<|x-y_j|<\gamma_2d_j}|[b,\rb]_1(f_j,g_j)(x)|^p\,dx\)^{\frac{1}{p}}&\geq\gamma_3\label{C11},\\
    \(\int_{|x-y_j|>\gamma_2d_j}|[b,\rb]_1(f_j,g_j)(x)|^p\,dx\)^{\frac{1}{p}}&\leq\frac{\gamma_3}4\label{C12}.
    \end{align}
In order to prove \eqref{C11} and \eqref{C12}, we first observe
that for every large enough number
$\widetilde\gamma_1>(\frac{1}{\ln \sqrt 2})^2$, by properties
(a) and (e) and the John--Nirenberg inequality,
    \begin{align*}
  &  \int_{|x-y_j|>\widetilde\gamma_1d_j}\left|(b(x)-b_{Q_j})\rb(f_j,g_j)(x)\right|^p\,dx\notag\\
    &\lesssim |Q_j|^{\(\frac1{p'_1}+\frac{1}{p'_2}+\frac1n\)p}
    \sum_{s=\lfloor\log_2(\widetilde\gamma_1)\rfloor}^\infty\int_{2^sd_j<|x-y_j|<2^{s+1}d_j}\frac{|b(x)-b_{Q_j}|^p}{|x-y_j|^{p(2n+1)}} \,dx\notag\\
    &\lesssim |Q_j|^{\(\frac1{p'_1}+\frac{1}{p'_2}+\frac1n\)p}\times \\
     &\,\,\,\,\,\,
    \sum_{s=\lfloor\log_2(\widetilde\gamma_1)\rfloor}^\infty2^{-s(2n+1)p}|Q_j|^{-\(2+\frac1n\)p}\int_{2^sd_j<|x-y_j|<2^{s+1}d_j}
    |b(x)-b_{Q_j}|^p\,dx\notag\\
    &\lesssim  |Q_j|^{\(\frac1{p'_1}+\frac{1}{p'_2} -2\)p}  \sum_{s=\lfloor\log_2(\widetilde\gamma_1)\rfloor}^\infty   2^{-s(2n+1)p}  s^p2^{sn}|Q_j|     \notag\\
    &\lesssim \sum_{s=\lfloor\log_2(\widetilde\gamma_1)\rfloor}^\infty 2^{-s\(2n-\frac np+\frac12\)p},
    \end{align*}
and hence by $1/p<2$,
  \begin{equation}
    \(\int_{|x-y_j|>\widetilde\gamma_1d_j}\left|(b(x)-b_{Q_j})\rb(f_j,g_j)(x)\right|^p\,dx\)^{\frac{1}{p}}
    \leq C_2 \widetilde\gamma_1^{-\(2n-\frac np+\frac12\)}.\label{Est4}
    \end{equation}
Next, for $\widetilde\gamma_2=8\widetilde\gamma_1$, using
\eqref{Est2} and \eqref{Est4}, we obtain the following estimates: for
$p\geq 1$,
    \begin{align}
 &\(\int_{\widetilde\gamma_1d_j<|x-y_j|<\widetilde\gamma_2d_j}|[b,\rb]_1(f_j,g_j)(x)|^p\,dx\)^{\frac1p} \nonumber\\
    &\quad\geq \(\int_{\widetilde\gamma_1d_j<|x-y_j|<\widetilde\gamma_2d_j}|\rb\((b-b_Q)f_j,g_j\)(x)|^p\,dx\)^{\frac1p} \nonumber\\
    &\quad\ \ \ \ -\(\int_{\widetilde\gamma_1d_j<|x-y_j|}|(b(x)-b_Q)\rb(f_j,g_j)(x)|^p\,dx\)^{\frac1p} \nonumber\\\
    &\quad\geq C_1 \eps|Q_j|^{\frac{1}{p_1'}+\frac{1}{p_2'}}\(\int_{\widetilde Q_j}|x-y_j|^{-2np}\,dx\)^{\frac1p} -C_2\widetilde\gamma_1^{-\(2n-\frac np+\frac12\)}\nonumber\\
    &\quad\geq C_1 \eps|Q_j|^{\frac{1}{p_1'}+\frac{1}{p_2'}} |\widetilde Q_j|^{\frac{1}{p}}  \widetilde \gamma_2^{-2n} |Q_j|^{-2} -C_2\widetilde\gamma_1^{-\(2n-\frac np+\frac12\)}\nonumber\\
    &\quad\geq C_1 \eps  (4 \sqrt n)^{-\frac{n}{p}} \widetilde\gamma_2^{-2n+\frac{n}{p}} -C_2  8^{\(2n-\frac np+\frac12\)}  \widetilde\gamma_2^{-\(2n-\frac np+\frac12\)}, \label{Est4'}
    \end{align}
    and for $1/2<p<1$,
 \begin{align}
    & \int_{\widetilde\gamma_1d_j<|x-y_j|<\widetilde\gamma_2d_j}|[b,\rb]_1(f_j,g_j)(x)|^p\,dx \nonumber\\
    &\quad\geq \int_{\widetilde\gamma_1d_j<|x-y_j|<\widetilde\gamma_2d_j}|\rb\((b-b_Q)f_j,g_j\)(x)|^p\,dx \nonumber\\
    &\quad\ \ \ \ -\int_{\widetilde\gamma_1d_j<|x-y_j|}|(b(x)-b_Q)\rb(f_j,g_j)(x)|^p\,dx \nonumber\\\
    &\quad\geq C_1 \eps^p|Q_j|^{\(\frac{1}{p_1'}+\frac{1}{p_2'}\)p}\int_{\widetilde Q_j}|x-y_j|^{-2np}\,dx -C_2\widetilde\gamma_1^{-\(2n-\frac np+\frac12\)p}\nonumber\\
    &\quad\geq C_1 \eps^p|Q_j|^{\(\frac{1}{p_1'}+\frac{1}{p_2'}\)p} |\widetilde Q_j|  \widetilde \gamma_2^{-2np} |Q_j|^{-2p} -C_2\widetilde\gamma_1^{-\(2n-\frac np+\frac12\)p}\nonumber\\
    &\quad\geq C_1 \eps^p  (4 \sqrt n)^{-n} \widetilde\gamma_2^{-2np+n} -C_2  8^{\(2n-\frac np+\frac12\)p}  \widetilde\gamma_2^{-\(2n-\frac np+\frac12\)p}. \label{Est4''}
    \end{align}
We can now use \eqref{Est4} and \eqref{Est4'} or \eqref{Est4''}
to replace  $\widetilde \gamma_1,\widetilde  \gamma_2$ with  $
\gamma_1$  sufficiently large and $\gamma_2=8 \gamma_1$, so
that \eqref{C11} and \eqref{C12} are verified for some
$\gamma_3>0$.

From here the arguments used in \cite{CT}, which in turn
followed the ones in~\cite{CDW}, can be repeated without any
changes. Namely, it is possible to construct sequences of cubes
$\{Q_j\}$ and functions $\{f_j\}$, $\{g_j\}$ in exactly the same
way as in \cite{CT} so that if any one of the
conditions~\eqref{1cmo}--\eqref{3cmo} were to be violated by
$b$, then we would arrive at a contradiction with the compactness
of $[b,\rb]_1$. The reader can easily follow the argument in
\cite[pp.491--493]{CT}, simply replacing $[b,I_\alpha]_1$
therein by $[b,\rb]_1$. To make our paper more self-contained, we now sketch an outline of the argument.

Using \eqref{Est1} and \eqref{Est3} it can be shown that  given
$\gamma_1$, $\gamma_2$, and $\gamma_3$ from  \eqref{C11}
and~\eqref{C12}, there exists a $\beta$ with
$0<\beta\ll\gamma_2$, depending on $p_1,\ p_2,\ n,$ and $\eps$,
such that for each measurable set
$$
E\subset\{x:\gamma_1d_j<|x-y_j|<\gamma_2d_j\}
$$
with $|E|/|Q_j|<\beta^n$, we get
\begin{align}
\left\|[b,\rb]_1(f_j,g_j)\right\|_{L^p(E)}\leq\frac{\gamma_3}{4}.\label{C2}
\end{align}

\noindent This estimate relies on the fact that the result of
Lemma~3.17 (1) of \cite{SW}, which is stated there for $p=1$,
also holds for all $p>0$, and hence also applies in our case,
where $p>1/2$. In \cite{CDW}, the estimate corresponding to our
\eqref{C2} was obtained using the case $p\geq 1$ of this lemma.

With this in hand, if we suppose that any one of the
conditions~\eqref{1cmo}--\eqref{3cmo} on $b$ fails, we can
construct a sequence of functions that will lead us to a
contradiction with the compactness of $[b,\rb]_1$. For
instance,  if $b$ does not satisfy \eqref{1cmo}, then there
exist some $\eps>0$ and a sequence $\{Q_j\}$ of cubes with
$|Q_j|\to0$ as $j\to\infty$ such that
        \begin{align*}
        \frac{1}{|Q_j|}\int_{Q_j}|b(y)-b_{Q_j}|\,dy\geq \eps,
        \end{align*}
         for every $j$.
First, select a subsequence, denoted by $\{Q_j^{(i)}\}$, so that the side-lengths satisfy
        \begin{align*}
        \frac{d_{j+1}^{(i)}}{d_{j}^{(i)}}&<\frac\beta{2\gamma_2}.
        \end{align*}
Next, let $f_j^{(i)}$ and $g_j^{(i)}$, as defined before, be the functions associated to the selected cubes~$Q_j^{(i)}$.
Finally, for each $k$, $m\in \mathbb{N}$, consider the sets:
\begin{align*}
  G&:=\{x:\gamma_1d^{(i)}_k<|x-y_k^{(i)}|<\gamma_2d_k^{(i)}\},\\
  G_1&:=G\setminus\{x:|x-y_{k+m}^{(i)}|\leq\gamma_2d_{k+m}^{(i)}\},\\
  G_2&:=\{x:|x-y_{k+m}^{(i)}|>\gamma_2d_{k+m}^{(i)}\}.
\end{align*}
The choice of the $Q_j^{(i)}$s implies that
\begin{align*}
\frac{|G_2^c\cap G|}{|Q_k^{(i)}|}\leq\beta^n; 
\end{align*}
see again \cite[p.307]{CDW}. For $p\geq 1$, we can then estimate
\begin{align}\label{eqn:noncompact}
  \|[b,\rb]_1&(f_k^{(i)},g_k^{(i)})-[b,\rb]_1(f_{k+m}^{(i)},g_{k+m}^{(i)})\|_{L^p}\nonumber\\
  &\geq\(\int_{G}\left|[b,\rb]_1(f_k^{(i)},g_k^{(i)})\right|^p - \int_{G_2^c\cap G}\left|[b,\rb]_1(f_k^{(i)},g_k^{(i)})\right|^p\)^{\frac1p}  \\
  &\,\,\,\,\,\,\,\,\,\,\,\,\,\,-\(\int_{G_2}\left|[b,\rb]_1(f_{k+m}^{(i)},g_{k+m}^{(i)})\right|^p\)^{\frac1p}.\nonumber
 \end{align}
Applying \eqref{C11}, \eqref{C2}, and \eqref{C12} respectively
to the three terms on the right-hand side
of~\eqref{eqn:noncompact}, we conclude
\begin{align*}
  \|[b,\rb]_1(f_k^{(i)},g_k^{(i)})-[b,\rb]_1(f_{k+m}^{(i)},g_{k+m}^{(i)})\|_{L^p}
&\geq\(\gamma_3^p-\frac{\gamma_{3}^p}{4^p}\)^{\frac1p}-\frac{\gamma_3}{4}\\
  &\geq \frac{\gamma_3}{2},
   \end{align*}
at least for $p\geq 1$.

In the case of $1/2 < p < 1$, a similar argument using the
reverse triangle inequality applied to the $p^{\text{th}}$
power of the left-hand side of~\eqref{eqn:noncompact} leads to
the lower bound
\begin{align*}
  \|[b,\rb]_1(f_k^{(i)},g_k^{(i)})-[b,\rb]_1(f_{k+m}^{(i)},g_{k+m}^{(i)})\|^p_{L^p}
\geq \left(1-\frac{2}{4^p}\right)\gamma_3^p.
   \end{align*}

This means that the image of the bounded set $\{(f_j,g_j)\}_j$
is not precompact, which contradicts our assumption on
$[b,\rb]_1$. The cases where $b$ does not satisfy
condition~\eqref{2cmo} or condition~\eqref{3cmo} are handled
similarly, and we conclude our proof here.
\end{proof}

\bigskip

\begin{remark} We observe that the arguments used for the Riesz
transforms~$\mathcal R^k_j$ in Theorem~\ref{main} also go
through in more generality. In order to get the lower bound (as
in formulas~\eqref{Est2} and \eqref{C11} above), one usually
uses the assumption that the kernel of the operator is
positive,  if not in the whole space, then at least in a
substantial portion of the space. For the Riesz
transforms~$\mathcal R^k_j$, although the kernel is not
positive, for each cube~$Q_j$ we can find another cube
$\widetilde{Q}_j$ such that $\widetilde{Q}_j$ lies in some
large annulus centered at the centre $y_j$ of~$Q_j$, and for
all $x\in \widetilde{Q}_j$ and $y$, $z\in Q_j$,
$$
    K(x-y, x-z)>0 \, \quad\quad{\rm and}\quad\quad \,|x-y| \approx |x-z| \approx |x-y_j|.
$$
This condition together with the Calder\'on--Zygmund conditions
on the size and regularity of the kernel suffice to obtain the
lower bound. This idea applies to certain other bounded
convolution-type singular operators, as we now discuss.

In the linear case, as is shown in Uchiyama's paper~\cite{U},
the Riesz transform can be replaced by convolution-type
singular integral operators with kernel of the form
$$
K(x)= \frac{ \Omega\left( x\right)}{|x|^n},
$$
where $\Omega$ is a homogeneous function of degree zero defined
on the unit sphere in~$\R^n$ and is sufficiently smooth. Such a
kernel is \emph{locally positive} in the sense that there is
some spherical cap~$A$ in the unit sphere~$S^{n-1}$ such that $
\Omega\left( x\right) > c_0 > 0$ for all $x \in A$.

Turning to the bilinear case, the arguments used for the
bilinear Riesz transforms~$\mathcal R^k_j$ in
Theorem~\ref{main} can be repeated for bounded convolution
bilinear operators with kernel of the form
$$
K(y,z)= \frac{ \Omega\left( \frac {(y,z)}{|(y,z)|}\right)}{(|y|^2+|z|^2)^n},
$$
where $\Omega$ is a homogeneous function of degree zero defined
on the unit sphere in~$\R^n\times\R^n$ and is sufficiently
smooth. We need more assumptions on this kernel than in the
linear case.

First, we assume that $1/K$ has an absolutely convergent
Fourier series in some ball in $\R^{2n}$. This assumption
guarantees that the boundedness of the commutator operator with
a function $b$ implies that $b \in \bmo$, by the main result
of~\cite{C}.

Second, we assume that there is some spherical cap $A$ on the
unit sphere $S^{n-1}$ such that $ \Omega\left( \frac
{(y,z)}{|(y,z)|}\right) > c_0 > 0$ for all $y$, $z \in A$. This
assumption enables us to get the lower bound
estimate~\eqref{Est2}. Indeed, given a cube $Q_j$ centred
at~$y_j$, we can find another cube $\widetilde{Q}_j$ such that
$\widetilde{Q}_j$ lies in some large annulus centered at $y_j$,
and for all $x\in \widetilde{Q}_j$ and all $y$, $z\in Q_j$,
$x-y$ and $x - z$ lie in an infinite cone in~$\R^n$ whose
vertex is at the origin and which passes through the cap~$A$.
From our assumption, it follows that
$$
K(x-y, x-z)>0 \, \quad\quad{\rm and}\quad\quad \,|x-y| \approx |x-z| \approx |x-y_j|
$$
for all $x \in  \widetilde Q_j$ and $y$, $z\in Q_j$. The
computations in the proof of Theorem \ref{main} can now be
repeated. We leave the details to the interested reader.
\end{remark}

\end{document}